\def\modd#1 #2{#1\ \mbox{\rm (mod}\ #2\mbox{\rm )}}
\DeclareMathOperator{\mex}{mex}
\newcommand{\Mod}[1]{\ (\mathrm{mod}\ #1)}
\newcommand{\mexover}[1]{ \overline{\mex}({#1})}
\newcommand{\smexover}[1]{\sigma \overline{\mex}({#1})}
\newcommand{\mexoverover}[1]{ \widehat{\mex}({#1})}
\newcommand{\smexoverover}[1]{\sigma \widehat{\mex}({#1})}
\newtheorem{theorem}{Theorem}[section]
\newtheorem{definition}{Definition}
\newtheorem{example}{Example}
\newtheorem{lemma}{Lemma}
\newtheorem{proposition}{Proposition}
\title{On Minimal Excludant over Overpartitions}
\author{Judy Ann L. Donato\thanks{Institute of Mathematics, University of the Philippines, Diliman, Quezon
City 1101, Philippines; \texttt{jadonato@math.upd.edu.ph}}}
\date{}
\begin{document}
\onehalfspacing
\maketitle

\begin{abstract}
A partition of a positive integer $n$ is a non-increasing sequence of positive integers which sum to $n$. A recently studied aspect of
partitions is the minimal excludant of a partition, 
which is defined to be the smallest positive integer that is not a part of the partition. In 2024, Aricheta and Donato studied the minimal excludant of the non-overlined parts of an overpartition, where
an overpartition of $n$ is a partition of $n$ in which the first occurrence of a number may be overlined. In this research, we explore two other definitions of the minimal excludant of an overpartition: (i) considering only the overlined parts, and (ii) considering both the overlined and non-overlined parts. We discuss the  combinatorial, asymptotic, and arithmetic
properties of the corresponding $\sigma$-function, which gives the sum of the minimal excludants over all overpartitions.  \\
\ \ \ \ \

\noindent \textbf{Keywords}: partitions, minimal excludant, smallest gap

\noindent \textbf{AMS\ Classification: }05A17, 11F11, 11F20, 11P83
\end{abstract}


\newpage
\section{Introduction}
The minimal excludant (mex) of a subset $S$ of a well-ordered set $U$ is the smallest value in $U$ that is not in $S$. The history of the minimal excludant goes way back in the 1930's when it was first used in combinatorial game theory by Sprague and Grundy \cite{sprague}, \cite{grundy}.

In 2006, Grabner and Knopfmacher \cite{Grabner} first applied the notion of  minimal excludant to integer partitions. They studied a new statistic for integer partitions called the smallest gap.  This was considered again in 2019 (using a different terminology), when Andrews and Newman \cite{andrewsnewman} defined the minimal excludant of an integer partition $\pi$, denoted $\mex(\pi)$, as the smallest positive integer that is not a part of $\pi$. Moreover, they also introduced the arithmetic function
\[\sigma \text{mex}(n):=\displaystyle \sum_{\pi \in \mathcal{P}(n)} \text{mex}(\pi),\]
where $\mathcal{P}(n)$ is the set of all partitions of $n$.

They proved that $\sigma \text{mex}(n)$ is equal to  $D_2(n)$, which is the number of partitions of $n$ into distinct parts using two colors. Moreover, they have shown that $\sigma \text{mex}(n)$ is almost always even and is odd exactly when $n=j(3j\pm 1)$ for some $j \in \mathbb{N}$.

The notion of minimal excludant can also be applied to other types of partitions. In particular, this study focuses on overpatitions. An overpartition of a positive integer $n$ is a non-increasing sequence of natural numbers whose sum is $n$ in which the first occurrence  of a number may be overlined. We denote by $\overline{p}(n)$ the number of overpartitions of $n$. For example, $\overline{p}(3)=8$ since there are 8 overpartitions of $3$ which are: 
\[3, \overline{3}, 2+1, \overline{2}+1, 2+\overline{1}, \overline{2}+\overline{1}, 1+1+1, \overline{1}+1+1.\]
Overpartitions are interesting to look at because both the properties of the classical partition (non-overlined parts) and the partitions into distinct parts (overlined parts) are present in this kind of partition.

In 2024, Aricheta and Donato \cite{ArichetaDonato}  applied the concept of minimal excludant of partitions to overpartitions and studied the minimal excludant of the non-overlined parts of an overpartition, denoted $\mexover{\pi}$, which is defined to be the smallest positive integer that is not a part of the \textit{non-overlined} parts of $\pi$.  For a positive integer $n$, denote the sum of $\mexover{\pi}$ over all overpartitions $\pi$ of $n$ as $\smexover{n}:$
\[\smexover{n}=\displaystyle \sum_{\pi \in \overline{\mathcal{P}}(n)} \mexover{\pi}, \]
where $\overline{\mathcal{P}}(n)$ is the set of all overpartitions of $n$. Moreover, we set $\smexover{0}=1$.

They proved that $\smexover{n}$ is equal to $D_3(n)$, the number of partitions of $n$ into distinct parts of three color which is analogous to the results of Andrews and Newman. They also proved that  $\smexover{n}$ is
almost always even and is odd exactly when $n=\frac{j(j+1)}{2}$ for some $j \in \mathbb{Z}$. They also derived an asymptotic formula for $\smexover{n}$ using Ingham's Tauberian Theorem on partitions.

Note that in the definition provided in \cite{ArichetaDonato}, the minimal excludant of an overpartition is based only on the non-overlined parts. This study extends the concept by introducing two alternative definitions of the minimal excludant for overpartitions: (i) considering only the overlined parts, and (ii) considering both the overlined and non-overlined parts.

First, we have the following definition for the minimal excludant of the overlined parts.

\begin{definition} \label{def1}
     The \textbf{minimal excludant of the overlined parts of an overpartition} $\pi$, denoted $\mexoverover{\pi}$, is the smallest positive integer that is not a part of the \textbf{overlined} parts of $\pi$.  For a positive integer $n$, denote the sum of $\mexoverover{\pi}$ over all overpartitions $\pi$ of $n$ as $\smexoverover{n}:$
\[\smexoverover{n}=\displaystyle \sum_{\pi \in \overline{\mathcal{P}}(n)} \mexoverover{\pi}. \]
Moreover, we set $\smexoverover{0}=1$..
\end{definition}

\newpage
As an example, the table below shows all overpartitions of $n=3$ and their corresponding minimal excludants of the overlined parts.

    \begin{center}
\begin{tabular}{ |c|c| } 
 \hline
 $\pi$ & $\mexoverover{\pi}$\\
 \hline
  $3$ &  $1$ \\ 
$\overline{3}$ &  $1$ \\ 
  $2 + 1$ &  $1$ \\
  $\overline{2} + 1$ &  $1$ \\
   $2 + \overline{1}$ &  $2$ \\
    $\overline{2} + \overline{1}$ &  $3$\\
    $1+1+1 $ &  $1$\\
   $\overline{1}+1+1$ &  $2$\\
 \hline 
\end{tabular}
\end{center}
Thus, $\smexoverover{3}= 12$.

We prove that the generating function for $\smexoverover{n}$ is related to the Ramanujan's $q$-series 
\[\sigma(q)=\displaystyle\sum_{m=0}^{\infty}\dfrac{ q^{{m+1} \choose 2}}{{(-q;q)_m}}.\] This series first appeared in Ramanujan's Lost Notebook: Part V \cite{Andrews6}. We have the following results.

\begin{theorem} \label{gen2}
For all positive integers $n$, we have 
\[\displaystyle \sum_{n=0}^\infty \smexoverover{n} q^n =\dfrac{(-q;q)_\infty}{(q;q)_\infty}\displaystyle \sum_{m=0}^\infty \dfrac{mq^{m \choose 2}}{(-q;q)_m}=\overline{P}(q)\sigma(q),\]
where $\overline{P}(q)$ is the generating function for  $\overline{p}(n)$ and $\sigma(q)$ is the Ramanujan's $q$-series.
\end{theorem}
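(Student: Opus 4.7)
The plan is to compute the generating function of $\smexoverover{n}$ by partitioning overpartitions according to the value of $\mexoverover{\pi}$, using two slightly different weight decompositions to produce the two expressions in the theorem.

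For the first equality, I would write $\sum_n \smexoverover{n}q^n = \sum_{m \geq 1} m \sum_{\mexoverover{\pi}=m} q^{|\pi|}$ and unpack the condition $\mexoverover{\pi}=m$: each of $1,2,\ldots,m-1$ must appear as an overlined part (contributing $q^{\binom{m}{2}}$ from the overlines), the integer $m$ may not appear overlined but may appear as a non-overlined part any number of times, each of $1,\ldots,m-1$ may additionally appear as a non-overlined part any number of times, and the parts exceeding $m$ form an arbitrary overpartition. Collecting these contributions yields
\[\sum_{\mexoverover{\pi}=m} q^{|\pi|} = \frac{q^{\binom{m}{2}}}{(q;q)_m}\cdot\frac{(-q^{m+1};q)_\infty}{(q^{m+1};q)_\infty}.\]
The key algebraic step is to factor out $\overline{P}(q)$ using the identities $(q;q)_\infty = (q;q)_m(q^{m+1};q)_\infty$ and $(-q;q)_\infty = (-q;q)_m(-q^{m+1};q)_\infty$, which give $(-q^{m+1};q)_\infty/(q^{m+1};q)_\infty = \overline{P}(q)(q;q)_m/(-q;q)_m$ and collapse the above to $\overline{P}(q)\,q^{\binom{m}{2}}/(-q;q)_m$. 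Weighting by $m$ and summing over $m \geq 0$ (the $m=0$ term vanishes) produces the first equality.

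For the second equality, I would re-derive the generating function using the alternative decomposition $\mexoverover{\pi} = \sum_{k \geq 0}\mathbf{1}[\mexoverover{\pi}\geq k+1]$. The event $\mexoverover{\pi}\geq k+1$ says exactly that $1,2,\ldots,k$ all appear as overlined parts, with no other restriction on $\pi$. A parallel computation (mandatory overlined parts $1,\ldots,k$ contributing $q^{\binom{k+1}{2}}$, free non-overlined multiplicities for $1,\ldots,k$ contributing $1/(q;q)_k$, and an arbitrary overpartition on parts $> k$) shows the generating function of such overpartitions is $\overline{P}(q)\,q^{\binom{k+1}{2}}/(-q;q)_k$. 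Summing over $k \geq 0$ gives $\overline{P}(q)\sigma(q)$.

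The main point to get right is the combinatorial bookkeeping for $\mexoverover{\pi}=m$: one must remember that $m$ is freely permitted as a non-overlined part, and that additional non-overlined copies of $1,\ldots,m-1$ must be counted separately from their mandatory overlined occurrences. Once the $q$-Pochhammer simplification is executed, both forms of the generating function drop out, and equating them yields as a bonus the non-obvious $q$-series identity $\sigma(q) = \sum_{m \geq 0} m\,q^{\binom{m}{2}}/(-q;q)_m$, whose direct analytic verification would itself be the hard part if one tried to bypass the combinatorial route.
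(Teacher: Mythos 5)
Your proposal is correct, and for the half of the theorem that identifies the series with $\sigma(q)$ it takes a genuinely different route from the paper. For the first equality you and the paper do essentially the same thing: classify overpartitions by the exact value $\mexoverover{\pi}=m$ (mandatory overlined $1,\dots,m-1$, no overlined $m$, free non-overlined multiplicities of $1,\dots,m$, an arbitrary overpartition on parts $>m$), and after the Pochhammer factorization each class contributes $\overline{P}(q)\,q^{\binom{m}{2}}/(-q;q)_m$; the paper merely packages the weight $m$ through a two-variable series $M(z,q)$ and applies $\partial/\partial z$ at $z=1$, while you weight by $m$ directly, which is only a cosmetic difference. Where you diverge is the second equality: the paper proves $\sum_{m\ge 1} m\,q^{\binom{m}{2}}/(-q;q)_m=\sigma(q)$ by a purely algebraic telescoping manipulation (writing $1/(-q;q)_m$ as $\bigl(1-q^m/(1+q^m)\bigr)/(-q;q)_{m-1}$ and shifting the summation index), whereas you re-derive the generating function combinatorially from the layer decomposition $\mexoverover{\pi}=\sum_{k\ge 0}\mathbf{1}[\mexoverover{\pi}\ge k+1]$, noting that the event $\mexoverover{\pi}\ge k+1$ (all of $\overline{1},\dots,\overline{k}$ present) has generating function $\overline{P}(q)\,q^{\binom{k+1}{2}}/(-q;q)_k$, so summing over $k$ gives $\overline{P}(q)\sigma(q)$ at once. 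Your double-counting route is arguably more illuminating, since it explains the identity $\sigma(q)=\sum_{m\ge 0} m\,q^{\binom{m}{2}}/(-q;q)_m$ combinatorially rather than verifying it formally, while the paper's telescoping is shorter and self-contained as a $q$-series computation. If you write yours up, two small points deserve a sentence: the interchange of the sum over $k$ with the sum over overpartitions is valid as a formal power series identity because $\mexoverover{\pi}\ge k+1$ forces $|\pi|\ge\binom{k+1}{2}$, so each coefficient receives only finitely many contributions; and the constant terms agree with the convention $\smexoverover{0}=1$ because the empty overpartition has minimal excludant $1$.
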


We then prove that $\smexoverover{n}$ is almost always even and derive an asymptotic formular for $\smexoverover{n}$.

\begin{theorem} \label{parity2}
We have 
\[\displaystyle \lim_{X \to +\infty} \dfrac{\#\{n\leq X: \smexoverover{n} \equiv \modd{0} {2}\}}{X}=1.\]
\end{theorem}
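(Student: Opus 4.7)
My overall plan is to reduce the parity question to a classical lacunary identity through a short chain of modulo-$2$ simplifications, after which the density of exceptional $n$ will be controlled by counting generalized pentagonal numbers.

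I would begin from the generating function in Theorem \ref{gen2}. Writing $\overline{P}(q)=(q^2;q^2)_\infty/(q;q)_\infty^2$ and using $(1-q^n)^2\equiv 1-q^{2n}\pmod{2}$, one obtains $(q;q)_\infty^2\equiv(q^2;q^2)_\infty\pmod{2}$, so $\overline{P}(q)\equiv 1\pmod{2}$. Therefore
\[\sum_{n\geq 0}\smexoverover{n}\,q^n\equiv\sigma(q)\pmod{2},\]
and the problem reduces to analyzing Ramanujan's $\sigma(q)$ modulo $2$.

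For this, I would exploit that in $\mathbb{F}_2[[q]]$ we have $(1+q^i)^{-1}\equiv(1-q^i)^{-1}$ for every $i\geq 1$, which gives $1/(-q;q)_m\equiv 1/(q;q)_m\pmod{2}$. Substituting into Ramanujan's series yields
\[\sigma(q)\equiv\sum_{m\geq 0}\frac{q^{\binom{m+1}{2}}}{(q;q)_m}\pmod{2},\]
and I would recognize the right-hand side as the classical generating function for partitions into distinct positive parts, namely $(-q;q)_\infty$ (via the bijection that subtracts the staircase $m,m-1,\ldots,1$, matching strictly decreasing $m$-tuples of positive integers with ordinary partitions having parts $\leq m$). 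A further application of $1+q^n\equiv 1-q^n\pmod{2}$ together with Euler's pentagonal number theorem then gives
\[\sigma(q)\equiv(q;q)_\infty\equiv\sum_{k\in\mathbb{Z}}q^{k(3k-1)/2}\pmod{2}.\]

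With this in hand, the theorem is immediate: $\smexoverover{n}$ is odd precisely when $n$ is a generalized pentagonal number $k(3k-1)/2$, and the count of such $n\leq X$ is $O(\sqrt{X})$, giving the claimed density $1$ for even values. I do not anticipate a deep obstacle; the chain of reductions is short and uses only standard modulo-$2$ manipulations together with two classical identities (the distinct-parts generating function and Euler's pentagonal theorem). The step most prone to a slip is the identification of $\sum_m q^{\binom{m+1}{2}}/(q;q)_m$ with $(-q;q)_\infty$, which I would verify carefully (including the $m=0$ term corresponding to the empty partition); as a bonus, the argument will also yield an exact characterization of the odd values, paralleling the odd-at-triangular-numbers result of \cite{ArichetaDonato}.
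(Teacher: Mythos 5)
Your proof is correct, and it takes a genuinely different route from the paper at the decisive step. Both arguments begin the same way, reducing modulo $2$ via $\overline{P}(q)=(q^2;q^2)_\infty/(q;q)_\infty^2\equiv 1\pmod 2$ (this is the paper's Lemma~\ref{overeven}), so that $\smexoverover{n}\equiv S(n)\pmod 2$ where $\sigma(q)=\sum_{n\ge 0}S(n)q^n$. From there the paper simply cites the theorem of Andrews, Dyson and Hickerson that $S(n)=0$ for almost all $n$ --- a deep result proved through Hecke-type series and the arithmetic of $\mathbb{Q}(\sqrt{6})$ --- and concludes immediately. You instead stay elementary: using $(-q;q)_m\equiv(q;q)_m\pmod 2$, Euler's identity $\sum_{m\ge 0}q^{\binom{m+1}{2}}/(q;q)_m=(-q;q)_\infty$, and the pentagonal number theorem, you obtain $\sigma(q)\equiv(q;q)_\infty\equiv\sum_{k\in\mathbb{Z}}q^{k(3k-1)/2}\pmod 2$, and since the generalized pentagonal numbers are pairwise distinct there is no cancellation mod $2$. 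Your route buys more than the stated theorem: it gives the exact parity law that $\smexoverover{n}$ is odd precisely when $n=k(3k-1)/2$ for some $k\in\mathbb{Z}$ (equivalently, $24n+1$ is a square), with the $O(\sqrt{X})$ count making the density statement quantitative, in close analogy with the triangular-number parity result of \cite{ArichetaDonato}; the paper's route is shorter given the citation, and the cited ADH theorem of course encodes much finer information about the values $S(n)$ themselves, but for parity alone your self-contained argument is both more elementary and stronger.
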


\begin{theorem} \label{asym2}
We have
    \[\smexoverover{n} \sim \dfrac{e^{\pi{\sqrt{n}}}}{4n}\] \text{ as } $ n \rightarrow \infty.$ 
\end{theorem}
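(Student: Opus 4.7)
My plan is to apply Ingham's Tauberian Theorem to $F(q) := \sum_{n \geq 0}\smexoverover{n}q^n$, which by Theorem \ref{gen2} factors as $\overline{P}(q)\sigma(q)$. The strategy is to determine the asymptotic behaviour of each factor at $q=e^{-t}$ as $t\to 0^+$, multiply them together, and verify the weak monotonicity needed to invoke Ingham. For the first factor, the behavior of $\overline{P}$ is classical: using the identity $\overline{P}(q) = (q^2;q^2)_\infty/(q;q)_\infty^2$ together with the modular transformation of the Dedekind eta function, one obtains
$$\overline{P}(e^{-t}) \sim \tfrac{1}{2}\sqrt{t/\pi}\; e^{\pi^2/(4t)} \quad (t \to 0^+),$$
which on its own recovers the known asymptotic $\overline{p}(n) \sim e^{\pi\sqrt{n}}/(8n)$ when Ingham is applied to $\overline{P}$ alone.

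The crucial step is to show $\sigma(e^{-t}) \to 2$. Using $m(m+1)/2 = 1 + 2 + \cdots + m$, each summand of Ramanujan's series factors as
$$\frac{q^{m(m+1)/2}}{(-q;q)_m} = \prod_{j=1}^{m}\frac{q^{j}}{1+q^{j}} \leq \frac{1}{2^{m}} \quad \text{for } q\in(0,1],$$
since $q^{j}/(1+q^{j}) \leq 1/2$ on that interval. This provides a geometric dominating series, so dominated convergence gives $\lim_{q\to 1^{-}}\sigma(q) = \sum_{m\geq 0} 2^{-m} = 2$, and hence $F(e^{-t}) \sim \sqrt{t/\pi}\;e^{\pi^2/(4t)}$.

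For Ingham's Tauberian Theorem I still need the weak monotonicity $\smexoverover{n+1}\geq \smexoverover{n}$; this follows from the injection $\overline{\mathcal{P}}(n)\hookrightarrow \overline{\mathcal{P}}(n+1)$ sending $\pi$ to $\pi\cup\{1\}$ with the adjoined $1$ \emph{non-overlined}, which leaves the set of overlined parts of $\pi$ unchanged and so preserves $\mexoverover{\cdot}$ exactly. Feeding the parameters $\lambda = 1/\sqrt{\pi}$, $\alpha = 1/2$, and $A = \pi^2/4$ into Ingham's conclusion
$$a_n \sim \frac{\lambda\,A^{\alpha/2+1/4}}{2\sqrt{\pi}\;n^{\alpha/2+3/4}}\,e^{2\sqrt{An}}$$
then yields $\smexoverover{n} \sim (4n)^{-1}e^{\pi\sqrt{n}}$. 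The only delicate point is the limit $\sigma(e^{-t})\to 2$: the telescoping identity above is precisely what makes the dominating series geometric with ratio $1/2$, pinning down the constant $2$ and thereby the coefficient $1/4$ in the final asymptotic; everything else is a routine application of the modular transformation and Ingham's theorem.
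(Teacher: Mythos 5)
Your proof is correct and follows the same overall route as the paper: apply Ingham's Tauberian theorem (Proposition \ref{asymingham}) to the factorization $\overline{P}(q)\sigma(q)$ from Theorem \ref{gen2}, with parameters $\alpha=1/\sqrt{\pi}$, $\beta=1/2$, $C=\pi^2/4$, after establishing $\overline{P}(e^{-t})\sim \frac{\sqrt{t}}{2\sqrt{\pi}}e^{\pi^2/(4t)}$; your exponent bookkeeping ($A^{\alpha/2+1/4}$, $n^{\alpha/2+3/4}$ versus the paper's $C^{(2\beta+1)/4}$, $n^{(2\beta+3)/4}$) agrees and both yield $e^{\pi\sqrt{n}}/(4n)$. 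Where you genuinely differ is in the two auxiliary inputs. For the limit $\sigma(e^{-t})\to 2$ the paper quotes Zagier's expansion $\sigma(e^{-t})=2-2t+5t^2-\cdots$, whereas you prove it from scratch: writing the $m$-th summand as $\prod_{j=1}^{m} q^{j}/(1+q^{j})\le 2^{-m}$ and invoking dominated convergence (Tannery's theorem) is elementary and self-contained, pinning down the constant $2$ without any appeal to quantum modular forms; the trade-off is that Zagier's expansion carries a full asymptotic series, though only the leading constant is needed here. For the hypothesis that $\{\smexoverover{n}\}$ is weakly increasing, the paper only remarks that $\{\overline{p}(n)\}$ is increasing, which does not by itself control the mex-sums; your injection $\overline{\mathcal{P}}(n)\hookrightarrow\overline{\mathcal{P}}(n+1)$ adjoining a non-overlined $1$, which leaves the overlined parts and hence $\mexoverover{\cdot}$ unchanged, is a complete and rigorous justification of this hypothesis and is arguably an improvement over the paper's brief assertion.
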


We now present the following definition for the minimal excludant of both the overlined and non-overlined parts, or we simply call as the minimal excludant of an overpartition.

\begin{definition}
 The \textbf{minimal excludant of an overpartition} $\pi$, denoted $\mexover{\pi}$, is the smallest positive integer that is not a part of \textbf{both the overlined and non-overlined parts} of $\pi$.  For a positive integer $n$, denote the sum of $\mexover{\pi}$ over all overpartitions $\pi$ of $n$ as $\smexover{n}:$
\[\smexover{n}=\displaystyle \sum_{\pi \in \overline{\mathcal{P}}(n)} \mexover{\pi}. \] Moreover, we set $\smexover{0}=1$.
\end{definition}

Note that we used the same notation used in \cite{ArichetaDonato} for the minimal excludant of the non-overlined parts of an overpartition for the definition above. This is because it is more natural to use the notations $\mexover{n}$ and $\smexover{n}$ for the definition of the minimal excludant of both parts.

The table below shows all overpartitions of $n=3$ and their corresponding minimal excludants.
\begin{center}
\begin{tabular}{ |c|c| } 
 \hline
 $\pi$ & $\mexover{\pi}$\\
 \hline
 $3$ & $1$ \\ 
$\overline{3}$ & $1$ \\ 
  $2 + 1$ & $3$\\
  $\overline{2} + 1$ & $3$ \\
   $2 + \overline{1}$ & $3$\\
     $\overline{2} + \overline{1}$ & $3$\\
   $1+1+1$ & $2$\\
   $\overline{1}+1+1$ & $2$\\
 \hline 
\end{tabular}
\end{center}
Thus, $\smexover{3}= 18$.

We initially observe the parity of $\smexover{n}$ and prove that $\smexover{n}$ is always even using its combinatorial properties.

\begin{theorem} \label{parity3} For a positive integer $n$, $\smexover{n}$ is always even.
\end{theorem}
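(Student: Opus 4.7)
My plan is to exhibit an involution $\Phi$ on $\overline{\mathcal{P}}(n)$ (for $n \geq 1$) that preserves the value of $\mexover{\pi}$ and has no fixed points; this immediately partitions $\overline{\mathcal{P}}(n)$ into pairs of overpartitions contributing equal summands to $\smexover{n}$, forcing the total to be even.

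To construct $\Phi$, I would take any $\pi \in \overline{\mathcal{P}}(n)$, let $k$ be the smallest part of $\pi$ (well-defined because $n \geq 1$), and \emph{toggle} the overlining of the value $k$: if no copy of $k$ is overlined in $\pi$, overline the leftmost occurrence of $k$; otherwise, remove the overline from the (necessarily unique) overlined copy of $k$. The overpartition convention permits at most one copy of each value to carry an overline, so this operation is unambiguous, and the multiset of part-values is unchanged.

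I would then verify the three required properties. First, $\Phi(\pi)$ is again an overpartition of $n$ since the multiset of parts ignoring overlines is unchanged. Second, the smallest part of $\Phi(\pi)$ is still $k$, so $\Phi$ is self-inverse. Third, $\Phi(\pi) \neq \pi$ because the two differ precisely in whether $k$ is overlined. For the key invariance, observe that $\mexover{\pi}$ depends only on the set of distinct values appearing as parts of $\pi$, and this set is preserved by $\Phi$; hence $\mexover{\Phi(\pi)} = \mexover{\pi}$. Grouping $\smexover{n}$ by $\Phi$-orbits then yields
\[\smexover{n} = 2 \sum_{\{\pi, \Phi(\pi)\}} \mexover{\pi},\]
which is even.

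There is no genuine analytic or arithmetic obstacle in this approach: the only creative step is identifying the right involution, after which every verification is mechanical. The single hypothesis used is $n \geq 1$, which ensures that a smallest part of $\pi$ exists so that $\Phi$ is defined everywhere on $\overline{\mathcal{P}}(n)$.
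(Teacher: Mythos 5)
Your proof is correct, and it rests on the same underlying observation as the paper's — that $\mexover{\pi}$ is blind to overlines, i.e.\ depends only on the multiset of part values — but the mechanism differs. The paper groups the overpartitions of $n$ by their underlying ordinary partition $\pi'$ and invokes a counting lemma (its Theorem on $2^m$ overpartitions from a multiset with $m$ distinct values) to conclude each class has even cardinality $2^{m_i}$ and constant mex, giving the explicit decomposition $\smexover{n}=2^{m_1}a_1+\cdots+2^{m_k}a_k$. You instead define a fixed-point-free, mex-preserving involution $\Phi$ (toggling the overline on the smallest part), which pairs overpartitions within those same classes and yields evenness directly. Your route is more economical: it avoids the $2^m$ lemma entirely and only needs that each nonempty overpartition has a smallest part whose overline can be toggled, with all verifications (well-definedness under the "at most one overlined copy per value" convention, $\Phi^2=\mathrm{id}$, $\Phi(\pi)\neq\pi$, invariance of the set of distinct values) going through mechanically. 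What the paper's coarser decomposition buys in exchange is the structural formula above, which records how each underlying partition contributes a power of $2$ times its mex, rather than just parity.
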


Then, we show that the generating function of $\smexover{n}$ can be related to the unilateral basic hypergeometric series ${}_1\phi_1(q;-q;q,-2q)$ as follows.

\begin{theorem}  \label{gen3}
For all positive integers $n$, we have 
\[\displaystyle \sum_{n=0}^\infty \smexover{n}q^n=\dfrac{(-q;q)_\infty}{(q;q)_\infty}\displaystyle \sum_{m=0}^\infty \dfrac{2^mq^{m+1 \choose 2}}{(-q;q)_m} = \overline{P}(q){}_1\phi_1(q;-q;q,-2q),\]
where $\overline{P}(q)$ is the generating function for $\overline{p}(n)$ and ${}_1\phi_1(q;-q;q,-2q)$ is the unilateral basic hypergeometric series
\[\displaystyle \sum_{n=0}^\infty \dfrac{2^n\cdot q^{{n+1} \choose 2}}{(-q;q)_n}.\]
\end{theorem}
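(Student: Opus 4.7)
\medskip
\noindent\textbf{Proof proposal.} My plan is to compute the generating function by slicing the sum $\smexover{n}$ according to the value of the mex, or more conveniently, by using the identity
\[\mexover{\pi}=\sum_{k\geq 1}\bigl[k\leq \mexover{\pi}\bigr]=\sum_{k\geq 1}\bigl[1,2,\ldots,k-1\text{ all occur as parts of }\pi\bigr].\]
Summing first over $\pi\in\overline{\mathcal{P}}(n)$ and then interchanging the order of summation reduces the problem to computing, for each $k\geq 1$, the generating function
\[N_k(q):=\sum_{n\geq 0}N_k(n)q^n,\qquad N_k(n):=\#\bigl\{\pi\in\overline{\mathcal{P}}(n):\ 1,2,\ldots,k-1\text{ all appear in }\pi\bigr\}.\]

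The next step is to build $N_k(q)$ from Euler-style local factors. For each fixed part size $j$, the contribution of $j$ to the generating function of a general overpartition is $(1+q^j)/(1-q^j)$ (either absent, or present with or without the first occurrence overlined). For $j\in\{1,\ldots,k-1\}$ the part must be present at least once, which kills the ``absent'' term and forces the local factor to be $2q^j/(1-q^j)$. Writing $m=k-1$, I therefore expect
\[N_{m+1}(q)=\prod_{j=1}^{m}\frac{2q^{j}}{1-q^{j}}\cdot\prod_{j=m+1}^{\infty}\frac{1+q^{j}}{1-q^{j}}.\]
Factoring out $\overline{P}(q)=\prod_{j\geq 1}(1+q^j)/(1-q^j)$ and collecting the $q$-powers $q^{1+2+\cdots+m}=q^{\binom{m+1}{2}}$, the factor $\prod_{j=1}^{m}(1-q^j)$ cancels and what remains is $1/\prod_{j=1}^m(1+q^j)=1/(-q;q)_m$, giving the clean identity
\[N_{m+1}(q)=\overline{P}(q)\cdot\frac{2^{m}q^{\binom{m+1}{2}}}{(-q;q)_{m}}.\]
Summing over $m\geq 0$ (with the $m=0$ term correctly yielding $\overline{P}(q)$ and accounting for the convention $\smexover{0}=1$ via the empty overpartition) produces exactly the right-hand side of Theorem~\ref{gen3}.

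The steps are essentially routine once the slicing is set up, so I do not anticipate a serious obstacle; the only bookkeeping I would be careful about is verifying the boundary cases. In particular I want to double-check that the local-factor derivation is consistent with the overpartition convention that the \emph{first} occurrence (at most one) of a part may be overlined, which gives the factor $2$ (not $1+q^j$) when the part is required to appear. I would also double-check the $k=1$ (equivalently $m=0$) term, where there is no constraint and the formula must collapse to $\overline{P}(q)$, and the $n=0$ term, where only the empty overpartition contributes with mex equal to $1$. Once these sanity checks are in place, the identification with ${}_1\phi_1(q;-q;q,-2q)$ in the statement of the theorem is just a matter of recognizing the series $\sum_{m\geq 0} 2^m q^{\binom{m+1}{2}}/(-q;q)_m$.
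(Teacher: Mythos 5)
Your proposal is correct, and it takes a genuinely different route from the paper. The paper fixes the exact value of the minimal excludant: it builds the two-variable series $M(z,q)=\sum_m z^m\,2^{m-1}q^{\binom{m}{2}}(1-q^m)\,\frac{(-q;q)_\infty}{(q;q)_\infty(-q;q)_m}$ (the factor $(1-q^m)$ encoding that part $m$ must be \emph{absent}), applies $\partial_z|_{z=1}$, and then must untangle the resulting sum $\sum_m m\,2^{m-1}q^{\binom{m}{2}}(1-q^m)/(-q;q)_m$ by a shift-of-index manipulation that produces a functional equation (the sum equals half of itself plus a simpler series), from which the stated form follows. You instead sum the indicator identity $\mexover{\pi}=\sum_{k\ge 1}[\,1,\dots,k-1\text{ all occur in }\pi\,]$, so you only ever need the ``mex at least $k$'' counts $N_k(n)$, whose generating functions factor immediately: the required parts contribute $2q^j/(1-q^j)$ each and the rest contribute $(1+q^j)/(1-q^j)$, giving $N_{m+1}(q)=\overline{P}(q)\,2^m q^{\binom{m+1}{2}}/(-q;q)_m$ directly; summing over $m$ yields the theorem with no differentiation and no series rearrangement (the interchange of sums is legitimate since $N_k(n)=0$ for $\binom{k}{2}>n$, so each coefficient is a finite sum). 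Your approach is shorter and more transparent and handles the $n=0$ and $m=0$ boundary cases cleanly; the paper's approach, at the cost of the extra algebra, produces along the way the refined generating function $M(z,q)$ that tracks the distribution of $\mexover{\pi}$ itself, not just its sum. One small point: the statement's final equality with ${}_1\phi_1(q;-q;q,-2q)$ is verified in the paper by a short computation from the standard ${}_1\phi_1$ definition; since the theorem statement spells the series out explicitly, your ``recognition'' remark suffices, but including that two-line computation would make the identification airtight.
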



\section{Minimal Excludant of the Overlined Parts of an Overpartition}
\subsection{Generating Function of $\smexoverover{n}$}

\begin{proof}[Proof of Theorem \ref{gen2}]
Let $p^{\widehat{\mex}}(m,n)$ be the number of overpartitions $\pi$ of $n$ with $\mexoverover{\pi}=m.$  
Then we have the following double series $M(z,q)$ in which the coefficient of $z^mq^n$ is $p^{\widehat{\mex}}(m,n)$:

\begin{align*}
M(z,q)&:=\displaystyle\sum_{n=0}^\infty \displaystyle\sum_{m=1}^\infty p^{\widehat{\mex}}(m,n)z^mq^n\\
&=\displaystyle \sum_{m=1}^{\infty} z^{m}\cdot q^1\cdot q^2 \cdots \cdot q^{m} \cdot \frac{\displaystyle\prod_{n=m+1}^\infty (1+q^n)}  {\displaystyle\prod_{n=1}^\infty (1-q^n)}\\
&=\dfrac{(-q;q)_\infty}{(q;q)_\infty} \displaystyle \sum_{m=1}^{\infty}\dfrac{z^{m} \cdot q^{m \choose 2}}{{(-q;q)_m}}.
\end{align*}


\noindent Thus,
\begingroup
\allowdisplaybreaks
\begin{align*}
    \displaystyle \sum_{n \geq 0} \smexoverover{n} q^n &= \dfrac{\partial}{\partial z}\Big|_{z=1} M(z,q)\\
    &=\dfrac{(-q;q)_\infty}{(q;q)_\infty} \displaystyle\sum_{m=1}^{\infty} \dfrac{m \cdot q^{m \choose 2}}{{(-q;q)_m}}\\
     &=\overline{P}(q)\displaystyle\sum_{m=1}^{\infty} \dfrac{m \cdot q^{m \choose 2}}{{(-q;q)_m}}\\
      &=\overline{P}(q)\displaystyle\sum_{m=1}^{\infty} \dfrac{m \cdot q^{m \choose 2}}{{(-q;q)_{m-1}}}\cdot\dfrac{1}{1+q^m}\\
       &=\overline{P}(q)\displaystyle\sum_{m=1}^{\infty} \dfrac{m \cdot q^{m \choose 2}}{{(-q;q)_{m-1}}} \left(1-\dfrac{q^m}{1+q^m} \right)\\
        &=\overline{P}(q) \left[\displaystyle\sum_{m=1}^{\infty} \ \dfrac{m \cdot q^{m \choose 2}}{{(-q;q)_{m-1}}} - \dfrac{m \cdot q^{{m+1} \choose 2}}{{(-q;q)_m}}\right]\\
         &=\overline{P}(q)\left[\displaystyle\sum_{m=0}^{\infty} \ \dfrac{(m+1) \cdot q^{{m+1} \choose 2}}{{(-q;q)_{m}}} - \displaystyle\sum_{m=0}^{\infty}\dfrac{m \cdot q^{{m+1} \choose 2}}{{(-q;q)_m}}\right]\\
         &=\overline{P}(q)\displaystyle\sum_{m=0}^{\infty}\dfrac{ q^{{m+1} \choose 2}}{{(-q;q)_m}}\\
    &=\overline{P}(q)\sigma(q).
 \end{align*}
\endgroup
\end{proof}


\subsection{Parity of $\smexoverover{n}$}
\begin{lemma}\label{overeven}
    The overpartition number $\overline{p}(n)$ is even for any positive integer n.
\end{lemma}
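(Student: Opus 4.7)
The plan is to work with the generating function
\[\overline{P}(q)=\frac{(-q;q)_\infty}{(q;q)_\infty}=\prod_{n=1}^{\infty}\frac{1+q^n}{1-q^n}\]
and reduce it modulo $2$. Since $-1\equiv 1\pmod{2}$, each factor $1+q^n$ is congruent to $1-q^n$ modulo $2$, so the numerator and denominator cancel and $\overline{P}(q)\equiv 1\pmod{2}$. Reading off coefficients then gives $\overline{p}(0)=1$ and $\overline{p}(n)\equiv 0\pmod{2}$ for every $n\geq 1$, which is exactly the claim.

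An alternative but equivalent route, which I would mention if a more self-contained argument using only standard $q$-series identities is preferred, is to first rewrite $\overline{P}(q)$ using the factorization $(1-q^n)(1+q^n)=1-q^{2n}$. Multiplying numerator and denominator by $(q;q)_\infty$ yields
\[\overline{P}(q)=\frac{(q^2;q^2)_\infty}{(q;q)_\infty^{2}}.\]
By the Freshman's Dream, $(q;q)_\infty^{2}\equiv (q^2;q^2)_\infty\pmod{2}$, so again $\overline{P}(q)\equiv 1\pmod{2}$ and the same conclusion follows.

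There is really no substantial obstacle here: the lemma is a classical parity fact for the overpartition function and the proof is a one-line congruence between formal power series. The only thing to be careful about is the base case $n=0$, where $\overline{p}(0)=1$ is odd; the statement is for positive $n$, so this case is correctly excluded. I would present the first (shorter) argument as the main proof.
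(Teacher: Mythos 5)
Your proof is correct and follows essentially the same route as the paper: reducing the generating function $\overline{P}(q)=\frac{(-q;q)_\infty}{(q;q)_\infty}$ modulo $2$ to conclude $\overline{P}(q)\equiv 1\pmod 2$, with your second variant (rewriting as $\frac{(q^2;q^2)_\infty}{(q;q)_\infty^2}$ and applying the binomial/Freshman's Dream congruence) being exactly the paper's argument. Your first, more direct cancellation of $1+q^n$ against $1-q^n$ in $\mathbb{F}_2[[q]]$ is a harmless streamlining of the same idea.
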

\begin{proof}
    Note that 
\[\overline{P}(q)=\displaystyle \sum_{n=1}^\infty \overline{p}(n)q^n =\dfrac{(-q;q)_\infty}{(q;q)_\infty} = \dfrac{(q^2;q^2)_\infty}{(q;q)^2_\infty} .\]
By the binomial theorem, $(q^2;q^2)_\infty \equiv (q;q)^2_\infty \Mod 2.$ This implies that $\overline{p}(n)$ is always even for all positive integers $n$.
\end{proof}

\begin{proof}[Proof of Theorem \ref{parity2}]
It was shown by Andrews et al. in \cite{Andrews2} that if $\sigma(q)=\sum_{n=0}^\infty S(n)q^n,$ then $S(n)$ is almost always zero. Hence, combining this with Lemma \ref{overeven} implies that $\smexoverover{n}$ is almost always even.
\end{proof}


\subsection{Asymptotic Formula for $\smexoverover{n}$}

To derive an asymptotic formula for $\smexoverover{n}$, we use the following asymptotic result by Ingham \cite{ingham} about the coefficients of a power series.

\begin{proposition} \label{asymingham}
  Let $A(q)= \sum_{n=0}^{\infty} a(n)q^n$ be a power series with radius of convergence equal to 1. Assume that $\{a(n)\}$ is a weakly increasing sequence of nonnegative real numbers. If there are constants $\alpha, \beta \in \mathbb{R}$, and $C>0$ such that
\[A(e^{-t})\sim \alpha t^{\beta}e^{\frac{C}{t}}, \text{ as } t \rightarrow 0^{+} \] Then 

\[a(n) \sim \dfrac{\alpha}{2\sqrt{\pi}}\dfrac{C^{\frac{2\beta+1}{4}}}{n^{\frac{2\beta+3}{4}}}e^{2\sqrt{Cn}}, \text{ as } n \rightarrow \infty.\]
\end{proposition}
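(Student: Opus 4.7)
The plan is to prove Ingham's Tauberian theorem by combining two classical ingredients: an Abelian/Tauberian analysis of the cumulative sum $B(x) := \sum_{n \leq x} a(n)$ via the Laplace--Stieltjes representation
\[
A(e^{-t}) \;=\; \int_0^\infty e^{-tx}\, dB(x) \;=\; t\int_0^\infty e^{-tx} B(x)\, dx,
\]
and a monotonicity-based squeeze that converts the resulting asymptotic for $B(x)$ into the pointwise asymptotic for $a(n)$.

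For the first step, I would apply a Karamata-type Tauberian theorem to invert the hypothesis $A(e^{-t}) \sim \alpha\, t^\beta e^{C/t}$ and obtain an asymptotic of the form $B(x) \sim K\, x^{\gamma}\, e^{2\sqrt{Cx}}$, where $K$ and $\gamma$ are uniquely determined by $\alpha, \beta, C$. The saddle point of the displayed Laplace integral lies at $x_0 = C/t^2$, so Laplace's method at this point gives the formal matching that pins down $K$ and $\gamma$. To make this rigorous I would use the substitution $y = 2\sqrt{Cx}$ to flatten the exponential factor to $e^{y}$; in the $y$-variables the hypothesis becomes a Laplace-transform asymptotic of standard regularly-varying type at infinity, to which Karamata's theorem applies directly. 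Inverting the substitution recovers the asymptotic for $B(x)$.

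For the second step, the weakly-increasing hypothesis yields the sandwich
\[
\frac{B(n) - B(n-h)}{h} \;\leq\; a(n) \;\leq\; \frac{B(n+h) - B(n)}{h}
\]
for every positive integer $h$. Choosing $h = h(n)$ with $h \to \infty$ and $h/\sqrt{n} \to 0$ (for instance $h = \lfloor n^{1/4} \rfloor$), a Taylor expansion of $B(x) \sim K x^{\gamma} e^{2\sqrt{Cx}}$ around $x = n$ shows that both bounds agree with $\sqrt{C/n}\, B(n)\,(1 + o(1))$ to leading order, forcing $a(n) \sim \sqrt{C/n}\, B(n)$. Substituting the asymptotic for $B$ and matching the resulting expression against the target formula determines $\gamma = -(2\beta+1)/4$ and $K = \frac{\alpha}{2\sqrt{\pi}}\, C^{(2\beta-1)/4}$, producing the claimed pointwise asymptotic for $a(n)$.

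The main obstacle is the Tauberian inversion in the first step, since the exponential factor $e^{C/t}$ is not regularly varying at $t = 0$; the substitution $y = 2\sqrt{Cx}$ is the crucial device that reduces the problem to a standard Karamata setting, but one must still justify uniformity of the asymptotic and control the contribution to the Laplace integral from the regions away from the saddle point, typically via a smoothing kernel as in Ingham's original 1941 argument. The monotonicity squeeze in the second step is more routine but requires the window $h = h(n)$ to be chosen carefully so that both sides of the sandwich are tight to leading order.
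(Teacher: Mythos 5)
First, note that the paper does not prove Proposition \ref{asymingham}: it is quoted as a known result of Ingham \cite{ingham} and used as a black box, so there is no in-paper argument to compare yours against. Judged on its own terms, your outline has the right architecture (pass to the summatory function $B(x)=\sum_{n\le x}a(n)$ via $A(e^{-t})=t\int_0^\infty e^{-tx}B(x)\,dx$, then difference using monotonicity), and your bookkeeping of the constants $\gamma=-(2\beta+1)/4$ and $K=\frac{\alpha}{2\sqrt{\pi}}\,C^{(2\beta-1)/4}$ is exactly what Laplace's method at the saddle $x_0=C/t^2$ predicts. But the crucial step --- the Tauberian inversion giving $B(x)\sim Kx^{\gamma}e^{2\sqrt{Cx}}$ --- is not actually carried out, and the device you propose for it does not work. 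Substituting $y=2\sqrt{Cx}$ turns the kernel $e^{-tx}$ into the Gaussian kernel $e^{-ty^2/(4C)}$, so in the $y$-variable you no longer have a Laplace transform; and in any case Karamata's theorem requires the transform itself to be regularly varying at $0$, whereas $\alpha t^{\beta}e^{C/t}$ has an essential singularity there. No change of variables on the $x$ side alone can bring this into the Karamata regime. This exponential-growth case is precisely what makes Ingham's theorem nontrivial: his argument normalizes by the expected main term and runs a Wiener-type Tauberian argument, and, as has been pointed out in the recent literature on this proposition, the purely real-axis hypothesis $A(e^{-t})\sim\alpha t^{\beta}e^{C/t}$ as $t\to 0^{+}$ is by itself not sufficient --- one needs the asymptotic in a complex sector $|\arg t|<\pi/2-\delta$ (or an equivalent regularity condition). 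So the heart of the proof is missing, and the input you would need is in fact stronger than the stated hypothesis.

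The second step is essentially sound: the sandwich $\frac{B(n)-B(n-h)}{h}\le a(n)\le\frac{B(n+h)-B(n)}{h}$ is correct for weakly increasing nonnegative $a(n)$, and formally both sides are $\sqrt{C/n}\,B(n)\,(1+o(1))$ when $h\to\infty$ and $h/\sqrt{n}\to 0$. However, the fixed choice $h=\lfloor n^{1/4}\rfloor$ is not justified: the difference $B(n+h)-B(n)$ is smaller than $B(n)$ by a factor of order $h/\sqrt{n}$, so the unquantified $o(1)$ error in the asymptotic for $B$ can swamp it unless $h/\sqrt{n}$ tends to $0$ more slowly than that error, i.e.\ $h$ must be chosen adaptively against the (unknown) rate from step one. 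This is fixable, and you flag the issue yourself, but as written the first step remains a genuine gap rather than a technicality.
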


\begin{proof}[Proof of Theorem \ref{asym2}]
Note that $\{\smexoverover{n}\}$ is an increasing sequence of nonnegative real numbers, since $\{\overline{p}(n)\}$ is also an increasing sequence of nonnegative real numbers.\\

\noindent Let $A(q)=\dfrac{(-q;q)_\infty}{(q;q)_\infty}\sigma(q)$, where $a(n)=\smexoverover{n}$ as in Proposition \ref{asymingham}. From \cite{bhoria},
\begin{align}\dfrac{1}{(e^{-t}; e^{-t})_\infty} \sim \sqrt{\dfrac{t}{2\pi}} e^{\frac{\pi^2}{6t}} \text{ as } t \rightarrow 0^+. \label{1}
\end{align}
Moreover, we use the following identity of Euler
\begin{align}
(-q;q)_{\infty}=\dfrac{1}{(q;q^2)_\infty}=\dfrac{(q^2;q^2)_\infty}{(q;q)_\infty}.\label{2}
\end{align}

\noindent By (\ref{1}) and (\ref{2}), as $t \rightarrow 0^{+}$,
\[\dfrac{(-e^{-t};e^{-t})_\infty}{(e^{-t};e^{-t})_\infty} = \dfrac{(e^{-2t};e^{-2t})_\infty}{(e^{-t};e^{-t})^2_\infty}\sim \frac{\frac{t}{2\pi}  e^{\frac{2\pi^2}{6t}}}{\sqrt{\frac{2t}{2\pi}} e^{\frac{\pi^2}{12t}}}=\dfrac{\sqrt{t}}{2\sqrt{\pi}}e^{\frac{\pi^2}{4t}}.\] 
Moreover, from \cite{Zagier} (p.6), 
\[\sigma(e^{-t})=2-2t+5t^2-\dfrac{55}{3}t^3+\dfrac{1073}{12}t^4-\dfrac{32671}{60}t^5+\dfrac{286333}{72}t^6-\cdots.\]
Hence, as $t \rightarrow 0^{+}$,
\begin{align}
A(e^{-t}) \sim \dfrac{1}{\sqrt{\pi}}t^{1/2}e^{\frac{\pi^2}{4t}}. \label{3}
\end{align}

\noindent Take $\alpha=\frac{1}{\sqrt{\pi}}, \beta=\frac{1}{2}$ and $C=\frac{\pi^2}{4}$, by Proposition \ref{asymingham},
\[\smexoverover{n} \sim \dfrac{\frac{1}{\sqrt{\pi}}}{2\sqrt{\pi}} \dfrac{\left(\frac{\pi^2}{4}\right)^{1/2}}{n}e^{2\sqrt{\frac{\pi^2}{4}n}} = \dfrac{e^{\pi{\sqrt{n}}}}{4n}\]
as $n \rightarrow \infty$.
\end{proof}

\section{Minimal Excludant of an Overpartition}
\subsection{Parity of $\smexover{n}$}

 Before we derive the generating function for $\smexover{n}$, we first discuss the parity of $\smexover{n}$ and observe some of its combinatorial properties.
\begin{theorem}\label{2m}
    Let $\mathcal{S}$ be a multiset of positive integers. Let $m$ be the number of distinct integers in $\mathcal{S}$ and let n be the sum of elements of the $\mathcal{S}$. There are $2^m$ overpartitions of $n$ that can be formed using the integers in $S$.
\end{theorem}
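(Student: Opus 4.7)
The plan is to argue directly from the definition of an overpartition. Once the multiset $\mathcal{S}$ is fixed, the underlying partition of $n$ (i.e.\ the multiset of parts, ignoring overlining) is already determined. The only remaining freedom is the choice, for each distinct part size, of whether to overline its first occurrence. Since the definition of an overpartition allows this overline to be either present or absent independently for each distinct value, and $\mathcal{S}$ contains $m$ distinct integers, there are exactly $2^m$ independent binary choices.

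To make this rigorous, I would construct an explicit bijection
\[
\Phi \colon \{\text{overpartitions of } n \text{ with part-multiset } \mathcal{S}\} \longrightarrow \mathcal{P}(\mathcal{D}),
\]
where $\mathcal{D}$ is the set of the $m$ distinct integers occurring in $\mathcal{S}$ and $\mathcal{P}(\mathcal{D})$ denotes its power set. Given an overpartition $\pi$ with part-multiset $\mathcal{S}$, define $\Phi(\pi) \subseteq \mathcal{D}$ to be the subset of distinct part sizes whose first (equivalently, largest-indexed, per the non-increasing convention) occurrence in $\pi$ is overlined. Conversely, given $T \subseteq \mathcal{D}$, arrange the elements of $\mathcal{S}$ in non-increasing order and overline the first occurrence of each part size that belongs to $T$; this produces a valid overpartition of $n$ using the integers in $\mathcal{S}$, and is clearly the inverse of $\Phi$. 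Since $|\mathcal{P}(\mathcal{D})| = 2^m$, the result follows.

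I do not anticipate any real obstacle here: the theorem is essentially a direct counting consequence of the definition of an overpartition, and the main care is just to verify that the overlining condition (only the first occurrence of a part may carry the overline) translates cleanly into a free binary choice per distinct part size. This lemma will then be the combinatorial backbone for deducing Theorem~\ref{parity3}, since grouping overpartitions of $n$ by their underlying multiset partitions them into blocks of size $2^m$ with $m \geq 1$ (because $n \geq 1$ forces at least one distinct part), and within each such block one can pair overpartitions to show that the sum of $\mexover{\pi}$ over the block is even.
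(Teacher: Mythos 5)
Your proposal is correct and matches the paper's argument: both rest on the observation that, with the part-multiset fixed, the only freedom is an independent binary choice (overline the first occurrence or not) for each of the $m$ distinct part sizes, giving $2^m$ overpartitions. Your explicit bijection with the power set of the distinct values is just a slightly more formal packaging of the same counting.
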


\begin{proof}
Partition $\mathcal{S}$ into $m$ multisubsets, $S_1,...,S_m$ where we group the same integers. Note that in an overpartition, only the first occurrence of a number may be overlined, hence for each multiset $S_i$, there are only two ways to write the  equal integers in the multiset, one with all parts not overlined and the other with one part overlined. Thus, there are $2^m$ ways to form an overpartition using the integers in $\mathcal{S}$ as parts.
\end{proof}

\begin{example}
    Suppose we have $\mathcal{S}=\{5,3,3,3,2,2\}$, then $m=3$ and $n=18$. Moreover, there are exactly 2 overpartitions of 5 whose parts are elements of  $S_1=\{5\}$, namely $5$ and $\overline{5}$, 2 overpartitions of 9 whose parts are elements of   $S_2=\{3,3,3\}$, namely $3+3+3$ and $\overline{3}+3+3$, and 2 overpartitions of 4 whose parts are elements of  $S_3=\{2,2\}$, namely $2+2$ and $\overline{2}+2$. Hence, there are $2\cdot2\cdot2=2^3=8$ overpartitions of $18$ whose parts are elements of $\mathcal{S}$. 
\end{example}

\begin{proof}[Proof of Theorem \ref{parity3}]
Let $\pi$ be an overpartition. Let $\pi'$ be the ordinary partition obtained by removing the overlines in the overlined parts of $\pi$. For example, if $\pi=5+\overline{3}+3+\overline{2}+1$, then $\pi'=5+3+3+2+1$.
 
We  divide the set  of overpartitions of $n$ into subsets $\overline{\mathcal{P}}_1,...,\overline{\mathcal{P}}_k$  such that the overpartitions with the same $\pi'$  are grouped together. Let $m_1,...,m_k$ be the number of distinct parts of the corresponding $\pi'$ for each subset $\overline{\mathcal{P}}_1,...,\overline{\mathcal{P}}_k$, respectively. By Theorem \ref{2m}, the number of elements in each subset is $2^{m_1},...,2^{m_k} $, where $m_1,...,m_k \geq 1$. Moreover, the overpartitions in the same subset have equal minimal excludants. Thus, if the minimal excludant of any overpartition in the subset $\overline{\mathcal{P}}_1,...,\overline{\mathcal{P}}_k$ are $a_1,...,a_k$, respectively, then
\[\smexover{n}=2^{m_1}\cdot a_1+\cdots+2^{m_k}\cdot a_k,\]
proving that $\smexover{n}$ is always even.
\end{proof}
\noindent Here, we provide an illustration of Theorem \ref{parity3}. We divide the overpartitions of $n=4$ into the following subsets: 
\begin{align*}
    \overline{\mathcal{P}}_1&=\{4, \overline{4}\}\\ \overline{\mathcal{P}}_2&=\{3+1, \overline{3}+1,3+\overline{1},\overline{3}+\overline{1}\}\\ \overline{\mathcal{P}}_3&=\{2+2, \overline{2}+2\}\\ \overline{\mathcal{P}}_4&=\{2+1+1, \overline{2}+1+1,2+\overline{1}+1,\overline{2}+\overline{1}+1\}\\
    \overline{\mathcal{P}}_5&=\{1+1+1+1, \overline{1}+1+1+1\}.
\end{align*}

Notice that the overpartitions in each subset have the same $\pi'$ and minimal excludant as shown in the table below.\\
\begin{center}
\begin{tabular}{ |c|c|c|} 
 \hline
 $\pi$ & $\pi'$ & $\mexover{\pi}$\\
 \hline
 $4$ & \multirow{2}{*}{$4$} & \multirow{2}{*}{$1$} \\ 
$\overline{4}$ &  &  \\ 
\hline
$3 + 1$ &  \multirow{4}{*}{$3 + 1$} & \multirow{4}{*}{$2$} \\
$\overline{3} + 1$ & &\\
$3 + \overline{1}$ & &\\
$\overline{3} + \overline{1}$ & &\\
\hline
$2 + 2$ & \multirow{2}{*}{$2+2$}  & \multirow{2}{*}{$1$} \\
$\overline{2} + 2$ && \\
\hline
$2+1+1$ & \multirow{4}{*}{$2+1+1$} & \multirow{4}{*}{$3$}\\
$\overline{2} + 1+1$&&\\
$2+\overline{1}+1$&& \\
$\overline{2} + \overline{1}+1$ & &\\
\hline  
$1+1+1+1$ &  \multirow{2}{*}{$1+1+1+1$} &  \multirow{2}{*}{$2$} \\
$\overline{1}+1+1+1$ & &\\
 \hline 
\end{tabular}
\end{center}
Thus, $\smexover{4}=2\cdot 1+2^2\cdot 2+ 2\cdot1+2^2\cdot3+2\cdot 2=28$.

\subsection{Generating Function of $\smexover{n}$}

\begin{proof}[Proof of Theorem \ref{gen3}]
Let $p^{\overline{\text{mex}}}(m,n)$ be the number of overpartitions $\pi$ of $n$ with $\mexover{\pi}=m.$
Then we have the following double series $M(z,q)$ in which the coefficient of $z^mq^n$ is $p^{\overline{\text{mex}}}(m,n)$:
\begin{align*}
M(z,q)&:=\displaystyle \sum_{n=0}^\infty \displaystyle\sum_{m=1}^\infty p^{\overline{\text{mex}}}(m,n)z^mq^n\\
&=\displaystyle \sum_{m=1}^{\infty} z^{m}\cdot 2^{m-1} \cdot q^1\cdot q^2 \cdots \cdot q^{m-1} \cdot \frac{\displaystyle\prod_{n=m+1}^\infty (1+q^n)}  {\displaystyle\prod_{\substack{n=1 \\ n \neq m}}^\infty (1-q^n)}\\
&=\dfrac{(-q;q)_\infty}{(q;q)_\infty} \displaystyle \sum_{m=1}^{\infty}z^{m} \cdot 2^{m-1} \cdot q^{m \choose 2}\cdot \dfrac{(1-q^m)}{(-q;q)_m}.
\end{align*}


\noindent Thus,
\begingroup
\allowdisplaybreaks
\begin{align*}
    \displaystyle \sum_{n \geq 0} \smexover{n} q^n&= \dfrac{\partial}{\partial z}\Big|_{z=1} M(z,q)\\
    &=\dfrac{(-q;q)_\infty}{(q;q)_\infty} \displaystyle\sum_{m=1}^{\infty}m\cdot 2^{m-1} \cdot q^{m \choose 2}\cdot \dfrac{(1-q^m)}{(-q;q)_m} \stepcounter{equation}\tag{\theequation}\label{orig} \\
    &=\dfrac{(-q;q)_\infty}{(q;q)_\infty} \left[\displaystyle\sum_{m=1}^{\infty}\dfrac{m\cdot 2^{m-1} \cdot q^{m \choose 2}}{{(-q;q)_m}}-\displaystyle\sum_{m=1}^{\infty}\dfrac{m \cdot 2^{m-1} \cdot q^{m \choose 2}q^m}{{(-q;q)_m}}\right]\\
    &=\dfrac{(-q;q)_\infty}{(q;q)_\infty} \left[\displaystyle\sum_{m=1}^{\infty}\dfrac{m \cdot 2^{m-1} \cdot  q^{m \choose 2}}{{(-q;q)_m}}-\displaystyle\sum_{m=1}^{\infty}\dfrac{m\cdot 2^{m-1} \cdot q^{m+1 \choose 2}}{{(-q;q)_m}}\right]\\
    &=\dfrac{(-q;q)_\infty}{(q;q)_\infty} \left[\displaystyle\sum_{m=1}^{\infty}\dfrac{m\cdot 2^{m-1} \cdot q^{m \choose 2}}{{(-q;q)_m}}-\displaystyle\sum_{m=1}^{\infty}\dfrac{(m-1)\cdot 2^{m-2} \cdot q^{m \choose 2}}{{(-q;q)_{m-1}}}\right]\\
     &=\dfrac{(-q;q)_\infty}{(q;q)_\infty} \left[\displaystyle\sum_{m=1}^{\infty}\dfrac{m\cdot 2^{m-1} \cdot q^{m \choose 2}}{{(-q;q)_m}}-\dfrac{(m-1)\cdot 2^{m-2} \cdot 
 q^{m \choose 2}}{{(-q;q)_{m-1}}}\right]\\
      &=\dfrac{(-q;q)_\infty}{(q;q)_\infty} \left[\displaystyle\sum_{m=1}^{\infty} \dfrac{2^{m-1} \cdot q^{m \choose 2}}{(-q;q)_{m}} \left(m-\dfrac{(m-1)(1+q^m)}{2}\right)\right]\\
    &=\dfrac{(-q;q)_\infty}{(q;q)_\infty} \left[\displaystyle\sum_{m=1}^{\infty} \dfrac{2^{m-1}\cdot q^{m \choose 2}}{(-q;q)_{m}} \left(\dfrac{m-mq^m+1+q^m}{2}\right)\right]\\
    &=\dfrac{1}{2}\dfrac{(-q;q)_\infty}{(q;q)_\infty} \displaystyle\sum_{m=1}^{\infty} \dfrac{m\cdot 2^{m-1} \cdot q^{m \choose 2} \cdot (1-q^m)}{(-q;q)_{m}}+\dfrac{(-q;q)_\infty}{(q;q)_\infty} \displaystyle\sum_{m=1}^{\infty} \dfrac{ 2^{m-2} \cdot q^{m \choose 2}}{(-q;q)_{m-1}}.
\end{align*}
\endgroup

\noindent Hence, from (\ref{orig}), we have
\[
\displaystyle \sum_{n \geq 0} \smexover{n} q^n = \dfrac{1}{2} \displaystyle \sum_{n \geq 0} \smexover{n} q^n + \dfrac{(-q;q)_\infty}{(q;q)_\infty} \displaystyle\sum_{m=1}^{\infty} \dfrac{ 2^{m-2} \cdot q^{m \choose 2}}{(-q;q)_{m-1}}.\]

\noindent Thus,
\begin{align*}
\displaystyle \sum_{n \geq 0} \smexover{n} q^n &=\dfrac{(-q;q)_\infty}{(q;q)_\infty} \displaystyle\sum_{m=1}^{\infty} \dfrac{2\cdot 2^{m-2} \cdot q^{m \choose 2}}{(-q;q)_{m-1}}\\
&= \dfrac{(-q;q)_\infty}{(q;q)_\infty} \displaystyle\sum_{m=1}^{\infty} \dfrac{2^{m-1} \cdot q^{m \choose 2}}{(-q;q)_{m-1}}\\
&= \dfrac{(-q;q)_\infty}{(q;q)_\infty} \displaystyle\sum_{m=0}^{\infty} \dfrac{2^{m} \cdot q^{m+1 \choose 2}}{(-q;q)_{m}}.
\end{align*}
Now,
\begin{align*}
{}_1\phi_1(q;-q;q,-2q)&= \displaystyle \sum_{n=0}^\infty \dfrac{(q;q)_n}{(-q,q;q)_n} \left((-1)^nq^{n \choose 2}\right)(-2q)^n\\
&= \displaystyle \sum_{n=0}^\infty \dfrac{(q;q)_n}{(-q;q)_n(q;q)_n} \left((-1)^nq^{n \choose 2}\right)(-2q)^n\\
&= \displaystyle \sum_{n=0}^\infty \dfrac{1}{(-q;q)_n} \cdot2^nq^{n \choose 2}\cdot q^n\\
&= \displaystyle \sum_{n=0}^\infty \dfrac{2^n\cdot q^{{n+1} \choose 2}}{(-q;q)_n}.
\end{align*}

\noindent Therefore,
\[\displaystyle \sum_{n \geq 0} \smexover{n} q^n=\dfrac{(-q;q)_\infty}{(q;q)_\infty} \displaystyle\sum_{m=0}^{\infty} \dfrac{2^{m} \cdot q^{m+1 \choose 2}}{(-q;q)_{m}}=\overline{P}(q){}_1\phi_1(q;-q;q,-2q).\]
\end{proof}

\noindent {\bf Declarations}

\noindent \textbf{Conflict of Interests.}  The author have no relevant financial or non-financial interests to disclose.

\end{document}